\newtheorem{theorem}{Theorem}[section]
\newtheorem{lemma}[theorem]{Lemma}
\newtheorem{claim}[theorem]{Claim}
\title{Finding a good tree to burn}
\author{Anders Martinsson\thanks{Department of Computer Science, ETH Z\"urich, Switzerland\newline anders.martinsson@inf.ethz.ch}}
\date{\today}
\begin{document}

\maketitle
\begin{abstract}
    The burning number of a graph $G$ is the smallest positive integer $k$ such that the vertex set of $G$ can be covered with balls of radii $0, 1, \dots, k-1$. A well-known conjecture by Bonato, Janssen and Roshabin states that any connected graph on $n$ vertices has burning number at most $\lceil \sqrt{n} \rceil$. It was recently shown by Norin and Turcotte that the conjecture holds up to a factor of $1+o(1)$. In this note, we demonstrate how this result can be applied to determine the asymptotic value of the burning number for graph classes with given minimum degree. This is based on an observation about connected $2$-hop dominating sets, which may be of independent interest.
\end{abstract}

\section{Introduction}
Graph burning is a problem introduced by Bonato, Janssen and Roshanbin \cite{bonato15}. Given a graph $G$, we consider the following procedure. At each time step $t=1, 2, \dots$ we pick a vertex $v_t$ in $G$ and set it on fire. A vertex $v$ is burned at time $t$ if $v=v_t$, or either $v$ or one of its neighbors is burned at time $t-1$. The burning number of $G$, $b(G)$ is the smallest number of time steps to burn all vertices of $G$. Equivalently, the burning number is the smallest integer $k$ such that the vertex set of $G$ can be covered with balls of radii $0, 1, \dots, k-1$. The Graph Burning Conjecture, as first formulated by Bonato et al, states that any connected graph $G$ on $n$ vertices has burning number at most $\lceil \sqrt{n} \rceil$. This would be optimal as it can be observed (Theorem 2.9 \cite{bonato15}) that the path on $n$ vertices has burning number $\lceil \sqrt{n} \rceil$.

This conjecture has been confirmed in some special graph classes, such as caterpillars \cite{cat1,cat2}, spiders \cite{spid1, spid2}, and sufficiently large graphs with minimum degree at least $4$ \cite{pyrotechnics}, which was later improved to $3$ in \cite{NT22}. In the case of general connected graphs $G$ on $n$ vertices, it was shown in \cite{bonato15} that $b(G)\leq 2\lceil \sqrt{n}\rceil + 1 $, with gradually improving leading constant in \cite{burn1, burn2, pyrotechnics}, until recently when Norin and Turcotte \cite{NT22} showed that $b(G)\leq (1+o(1))\sqrt{n}$.

Given these results for general connected graphs, a natural way to prove stronger upper bounds on the burning number for some special class of graphs is to find a connected subgraph $H$ of $G$ with the property that any vertex in $G$ is close to $H$. For instance, the minimum size of a connected subgraph with the property that all vertices are at distance at most $1$ from it is known as the connected domination number $\gamma_c(G)$. This is equivalent to the maximum leaf spanning tree problem. For a connected graph with minimum degree at least $k$ for $k=3, 4$ or $5$, it is known \cite{tree3, tree45} that $\gamma_c(G)\leq 3n/(k+1) + O(1)$. Bastide et al. \cite{pyrotechnics} argued that $b(G) \leq b(H) + 1$, which combined with the improved bounds on the burning number of general connected graphs in \cite{NT22} implies that $b(G)\leq (1+o(1))\sqrt{3 n/(k+1)}$ for $k=3, 4$ or $5$. It is not too hard to see that these bounds are best possible for both $\gamma_c(G)$ and $b(G)$, as matching lower bounds can be obtained for any $k\geq 3$ by considering a ``$K_k-e$ necklace'' (see eg. Fig 1 in \cite{tree45}).

It is tempting to think the same statement holds for larger $k$, but this turns out to be false - at least for the connected domination number. An easy consequence of a result by Alon \cite{alon}, see the discussion on page 6 of  \cite{alonspencer}, is that there are $k$-regular graphs with $\gamma_c(G)\geq (1+o_k(1)) \frac{\ln k}k n$. In fact, this statement is even true if one drops the condition that $H$ needs to be connected. Caro, West and Yuster \cite{CWY} showed a matching upper bound of $\gamma_c(G)\leq (1+o_k(1)) \frac{\ln k}k n$ for any connected graph $G$ on $n$ vertices with minimum degree $k\geq 3$. 

In this note, we will give a short proof that the burning number of graphs with any minimum degree $k$ nevertheless behaves similar to the case of $k\leq 5$. We summarise this as follows.

\begin{theorem}\label{thm:mindeg} The burning number of any connected graph $G$ with $n$ vertices and minimum degree $k$ is at most $(1+o(1)) \sqrt{3n/(k+1)}$.
\end{theorem}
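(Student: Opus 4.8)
The plan is to reduce the theorem to the existence of a small \emph{connected $2$-hop dominating set}, i.e.\ a connected subgraph (subtree) $H$ of $G$ such that every vertex of $G$ lies within distance $2$ of $V(H)$, of size $(1+o(1))\,3n/(k+1)$, and then to burn $G$ by first burning $H$. Concretely, I would first establish a distance-$2$ analogue of the reduction of Bastide et al.: if $H$ is a connected subgraph of $G$ with $V(G)\subseteq N^{2}[V(H)]$, then $b(G)\le b(H)+2$. The argument is the natural one --- take an optimal burning sequence of $H$, note that a ball of radius $r$ in $H$ is contained in the ball of radius $r$ in $G$ with the same centre (since $d_G\le d_H$ on $V(H)$), and inflate every radius by $2$ so that the covered set grows from $V(H)$ to $N^2[V(H)]=V(G)$, the two extra radii $0,1$ being placed arbitrarily.

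With this in hand, the theorem follows by applying the Norin--Turcotte bound to the tree $H$: since $H$ is connected on $|V(H)|$ vertices, $b(H)\le (1+o(1))\sqrt{|V(H)|}$, and hence
\[
b(G)\ \le\ b(H)+2\ \le\ (1+o(1))\sqrt{|V(H)|}+2\ \le\ (1+o(1))\sqrt{3n/(k+1)},
\]
provided $|V(H)|\le (1+o(1))\,3n/(k+1)$. Here the $o(1)$ should be read as $n\to\infty$; the regime where $k$ is so large that $n/(k+1)$ does not tend to infinity must be treated separately, but there $G$ has small diameter and the bound is immediate.

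It therefore remains to prove the key observation: \emph{every connected graph $G$ on $n$ vertices with minimum degree $k$ contains a connected $2$-hop dominating set of size at most $(1+o(1))\,3n/(k+1)$}. I would obtain the size through a maximal distance-$2$ packing $I$, a set of vertices that are pairwise at distance at least $3$. Maximality forces $N^2[I]=V(G)$, so $I$ is $2$-hop dominating; and since the radius-$1$ balls $\{N^1[u]:u\in I\}$ are pairwise disjoint and each has at least $k+1$ vertices, we get $|I|\le n/(k+1)$. In other words, the ordinary distance-$2$ domination number is at most $n/(k+1)$, and this is exactly where the minimum-degree hypothesis is used.

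The set $I$ is not connected, so the main obstacle is to connect it into a tree while inflating its size only by the constant factor $3$. This is a distance-$2$ version of the Duchet--Meyniel inequality $\gamma_c(G)\le 3\gamma(G)-2$: I would form the auxiliary graph on $I$ in which two centres are adjacent when their clusters are joined by an edge of $G$ (such centres then lie at distance at most $5$ in $G$), observe that it is connected because $G$ is, take a spanning tree of it, and realise each of its $|I|-1$ edges by a short path in $G$. The naive realisation routes each edge through the two clusters and costs up to four internal vertices, which only yields the factor $5$; squeezing this down to the factor $3$ demanded by the theorem --- and thereby matching the $K_k-e$ necklace lower bound --- is the delicate point, and I expect it to require a charging argument in which each connecting vertex is charged to a private radius-$1$ ball of a nearby centre so that no ball is charged more than twice. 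Once the connected $2$-hop dominating set of size $(1+o(1))\,3n/(k+1)$ is produced, the three ingredients combine to give the theorem.
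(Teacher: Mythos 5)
Your overall architecture matches the paper's: reduce to a small connected $2$-hop dominating set $H$, use $b(G)\le b(H)+2$, and apply Norin--Turcotte to $H$. Those two reductions are carried out correctly (indeed you are slightly more careful than the paper about the regime where $n/(k+1)$ stays bounded). The problem is the construction of $H$, which you explicitly leave open, and which is precisely the content of the paper's key lemma. Your plan --- take a maximal distance-$2$ packing $I$ with $|I|\le n/(k+1)$, then connect it at cost factor $3$ via a Duchet--Meyniel-style argument --- cannot be completed in the generality you describe. Connecting a maximal distance-$2$ packing generically costs a factor of $5$, not $3$: in the path on $n$ vertices a maximal packing has about $n/5$ centres at pairwise distance $5$, and \emph{any} connected $2$-hop dominating set of the path must be a subpath of about $n-4$ vertices, so no charging scheme can bring the blow-up below $5|I|-4$ there. (This does not contradict the lemma, since $3n/(k+1)$ is vacuous for $k=1$; it shows that the minimum-degree hypothesis must enter the \emph{connection} step itself, not merely the bound on $|I|$.) The "at most two connecting vertices charged per ball" claim is therefore the entire difficulty, and you give no argument for it.

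The paper avoids the packing-then-connect paradigm altogether and interleaves the counting with the construction: grow $H$ greedily from a single vertex, and as long as some vertex $v$ lies at distance exactly $3$ from the current $H$, attach a path of length $3$ to $v$ (three new vertices per step). The point is that $v$ and all of its at least $k$ neighbours were at distance at least $2$ from the old $H$ but lie at distance at most $1$ from the new one, so the number of vertices within distance $1$ of $H$ grows by at least $k+1$ per step. Hence there are at most $\lfloor n/(k+1)\rfloor-1$ steps and $|V(H)|\le 3\lfloor n/(k+1)\rfloor-2$, which is exactly the factor $3$ you were trying to reach. I would recommend replacing your packing argument with this greedy growth; as it stands, your proof has a genuine gap at its central step.
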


In addition to applying the result by Norin and Turcotte, our proof makes use of the following observation. If, instead of considering connected domination number, one allows vertices to be at distance at most two from the connected subgraph $H$, making $H$ a connected $2$-hop dominating set, then the $\log k$ factor does not appear for large $k$. To the authors knowledge, this appears to not have been observed before, but may be interesting on its own.

\begin{lemma}\label{lem:2hopdom}
Let $G$ be a connected graph with $n$ vertices and minimum degree $k$. There exists a connected subgraph $H$ with at most $3\lfloor n/(k+1) \rfloor -2$ vertices such that any vertex of $G$ is at distance at most $2$ from $H$.
\end{lemma}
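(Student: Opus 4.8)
The plan is to build the subgraph $H$ greedily, one ``center'' at a time, maintaining connectivity throughout, and to charge each center against a disjoint ball so as to bound their number by $\lfloor n/(k+1)\rfloor$. Start with $H_1=\{c_1\}$ for an arbitrary vertex $c_1$. Given a connected $H_i$ that is not yet $2$-hop dominating, some vertex lies at distance at least $3$ from $H_i$; since $G$ is connected, the BFS layers around $H_i$ are contiguous, so there is in fact a vertex $c_{i+1}$ at distance exactly $3$. Fix a shortest path $w_0 w_1 w_2 c_{i+1}$ from $w_0\in H_i$ to $c_{i+1}$ and set $H_{i+1}=H_i\cup\{w_1,w_2,c_{i+1}\}$, which is again connected and contains three new vertices. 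Repeat until every vertex is within distance $2$ of the current subgraph; this terminating $H$ is connected and $2$-hop dominating by construction.

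The two things to verify are the vertex count and termination. For the count, the crucial observation is that whenever $c_{i+1}$ is chosen it is at distance at least $3$ from all of $H_i$, and in particular from every earlier center $c_1,\dots,c_i$. Hence the centers are pairwise at distance at least $3$, so their closed neighborhoods $N[c_1],\dots,N[c_m]$ are pairwise disjoint. The minimum-degree hypothesis gives $|N[c_j]|\ge k+1$ for each $j$, so $m(k+1)\le n$ and therefore $m\le\lfloor n/(k+1)\rfloor$. Since $H_1$ has one vertex and each later step adds exactly three, $|H|=1+3(m-1)=3m-2\le 3\lfloor n/(k+1)\rfloor-2$, as required; termination is immediate because the number of centers, and hence the number of iterations, is bounded by $m$.

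The part needing the most care is the coupling between the two roles played by the threshold distance $3$: it must be large enough that consecutive centers have disjoint closed neighborhoods (so that the packing bound $m\le\lfloor n/(k+1)\rfloor$ applies) and small enough that each new center can be attached using a path on only three new vertices. Choosing the new center at distance exactly $3$—rather than at the distance of the nearest non-dominated vertex, which could be larger—is what reconciles these two demands and produces the factor $3$. I would also note in passing that this is cleaner than first extracting a maximal distance-$3$ packing and then trying to connect it: such a packing is $2$-hop dominating and has at most $\lfloor n/(k+1)\rfloor$ vertices, but the natural ``distance-$\le 3$'' auxiliary graph on it need not be connected, so the incremental construction is preferable.
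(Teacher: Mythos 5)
Your proposal is correct and follows essentially the same construction as the paper: greedily attach a length-$3$ path to a vertex at distance exactly $3$ until the subgraph is $2$-hop dominating. The only (cosmetic) difference is in the counting, where you use pairwise disjointness of the closed neighborhoods of the centers, while the paper tracks the growth of the set of vertices within distance $1$ of $H_t$; both are the same packing bound of $k+1$ vertices per step.
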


Finally, by extending these ideas, we note that the Graph Burning Conjecture holds for all sufficiently large graphs with a weak degree condition.

\begin{theorem}\label{thm:weakdeg} For any $\varepsilon>0$ there exists $n_0=n_0(\varepsilon)$ such that any connected graph $G$ with at least $n_0$ vertices and where at least $\varepsilon n$ vertices have degrees distinct from $2$ satisfies $b(G)\leq (1-\Omega(\varepsilon))\sqrt{n}$.    
\end{theorem}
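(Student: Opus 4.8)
The plan is to find a connected subgraph $H$ of $G$ that is within distance $1$ of every vertex and whose order is a constant factor smaller than $n$, and then feed this into the two ingredients already available in the paper. Concretely, I aim to produce a connected $H$ such that every vertex of $G$ lies at distance at most $1$ from $H$ and $|V(H)|\le (1-c\varepsilon)n$ for an absolute constant $c>0$. Granting this, the observation of Bastide et al.\ that $b(G)\le b(H)+d$ whenever $H$ is connected and $d$-hop dominating, applied with $d=1$, together with the Norin--Turcotte bound applied to the connected graph $H$, yields
\[
b(G)\le b(H)+1\le (1+o(1))\sqrt{|V(H)|}+1\le (1+o(1))\sqrt{(1-c\varepsilon)n}+1.
\]
Using $\sqrt{1-c\varepsilon}\le 1-\tfrac{c}{2}\varepsilon$ and choosing $n_0(\varepsilon)$ large enough that the $o(1)\sqrt n$ term and the additive constant are dominated by $\tfrac{c}{4}\varepsilon\sqrt n$, this gives $b(G)\le (1-\tfrac{c}{4}\varepsilon)\sqrt n=(1-\Omega(\varepsilon))\sqrt n$, as desired.

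To build $H$, I would take a spanning tree $T$ of $G$ with the maximum possible number of leaves and simply delete all of its leaves, setting $H=T$ minus the leaves of $T$. Deleting the leaves of a tree leaves a (smaller) tree, so $H$ is connected; for $n\ge 3$ each deleted leaf is adjacent to a non-leaf of $T$ and hence lies within distance $1$ of $H$, while every remaining vertex is in $H$. Thus $H$ is a connected, $1$-hop dominating subgraph with $|V(H)|=n-\ell(T)$, where $\ell(T)$ is the number of leaves of $T$. (If all vertices of $T$ are leaves then $G$ has tiny diameter and $b(G)=O(1)$, so the statement is trivial; we may assume $H\neq\emptyset$.) It therefore suffices to show $\ell(T)=\Omega(\varepsilon n)$, i.e.\ that the maximum leaf number of $G$ is $\Omega(\varepsilon n)$.

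The heart of the matter, and the step I expect to be the main obstacle, is the following leaf-count estimate: \emph{a connected graph with $m$ vertices of degree different from $2$ has a spanning tree with $\Omega(m)$ leaves.} Since the hypothesis gives $m\ge \varepsilon n$, this delivers exactly what is needed. To prove it I would split on the type of the $m$ special vertices. Every vertex of degree $1$ is a leaf of \emph{every} spanning tree, so if at least $m/2$ of the special vertices have degree $1$ we are done immediately. Otherwise at least $m/2$ vertices have degree at least $3$, and here I would use the classical spanning-trees-with-many-leaves machinery: for a tree one has the identity $\ell(T)=2+\sum_{v}(\deg_T(v)-2)$ summed over branch vertices, so $\ell(T)\ge 2+(\text{number of branch vertices of }T)$, and it is enough to produce a spanning tree in which a constant fraction of the degree-$\ge 3$ vertices of $G$ are realised as branch vertices (tree-degree at least $3$). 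This is precisely the regime of Kleitman--West type results, where a connected graph of minimum degree $3$ has a spanning tree with at least $n/4+2$ leaves.

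The delicate point is that $G$ is not assumed to have minimum degree $3$: its degree-$\ge 3$ vertices sit among degree-$2$ paths and degree-$1$ pendants. The natural fix is to suppress all degree-$2$ vertices, contracting each maximal subdivision path into a single edge, obtaining a multigraph in which every vertex has degree $1$ or at least $3$; prove the leaf bound there; and lift a good spanning tree back to $G$, reinserting each suppressed subdivision path as a pendant path, which if anything only creates extra leaves. Two technicalities need care. First, suppression may create parallel edges and loops, but these only help, since the surplus subdivision paths can be hung off as pendants that contribute further leaves. Second, and more seriously, passing to a minimum-degree-$3$ core by deleting degree-$1$ vertices can cascade and destroy degree-$3$ vertices; controlling this cascade so that a positive fraction of the original $\Omega(m)$ high-degree vertices survives as branch vertices is the real work. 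Once this leaf estimate is established, the remaining steps reduce to the routine calculation of the first paragraph.
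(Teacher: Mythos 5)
Your overall architecture matches the paper's: reduce to finding a connected, $1$-hop dominating subgraph $H$ with $|V(H)|\le (1-c\varepsilon)n$ (equivalently, a spanning tree with $\Omega(\varepsilon n)$ leaves, i.e.\ $\gamma_c(G)\le (1-c\varepsilon)n$), then apply $b(G)\le b(H)+1$ and the Norin--Turcotte bound; the routine calculation in your first paragraph is fine. The problem is that the one step you yourself identify as ``the real work'' --- showing that after suppressing degree-$2$ vertices and iteratively deleting degree-$1$ vertices, a positive fraction of the $\Omega(\varepsilon n)$ special vertices survives into a minimum-degree-$3$ core --- is exactly the mathematical content of the theorem, and you leave it unproven. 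As written, the proposal is a reduction of the statement to an unproved lemma, so it has a genuine gap.

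The cascade is controlled by a short potential-function argument, which is how the paper closes this gap. Define the value of a graph as $\#\{v:\deg v\ge 3\}-\#\{v:\deg v=1\}$. Deleting a degree-$1$ vertex removes one degree-$1$ vertex (gaining $1$) and lowers its unique neighbour's degree by one, which costs at most $1$ (either a degree-$3$ vertex drops to degree $2$, or a degree-$2$ vertex drops to degree $1$); suppressing a degree-$2$ vertex leaves all other degrees unchanged. So the value never decreases, and when the process terminates with a minimum-degree-$3$ multigraph $G'$ the value equals $|V(G')|$. To make the initial value large you need your case split to be asymmetric, as in the paper: if at least $\varepsilon n/3$ vertices have degree $1$ you are done immediately (your first case), and otherwise at least $2\varepsilon n/3$ vertices have degree $\ge 3$ while at most $\varepsilon n/3$ have degree $1$, so the initial value is at least $\varepsilon n/3$ and hence $|V(G')|\ge \varepsilon n/3$. (Your symmetric ``$m/2$ versus $m/2$'' split does not give a positive initial value, since the value subtracts the degree-$1$ count.) From there, $\gamma_c(G')\le \tfrac34|V(G')|+O(1)$ by the minimum-degree-$3$ bound, and each reduction step, when undone, increases $\gamma_c$ by at most $1$, so $\gamma_c(G)\le (n-|V(G')|)+\gamma_c(G')\le (1-\varepsilon/12)n+O(1)$. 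This is the same lifting you sketch via pendant paths, but carried out per elementary step so that the bookkeeping is exact. With this lemma supplied, your argument is complete and is essentially the paper's.
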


\section{Proof of Lemma \ref{lem:2hopdom} and Theorems \ref{thm:mindeg} and \ref{thm:weakdeg}}

\begin{proof}[Proof of Lemma \ref{lem:2hopdom}.] We construct the subgraph $H$ iteratively. Let $H_0$ be any vertex of $G$. For any $t=0, 1, \dots$ if there exists a vertex $v_t$ in $G$ at distance $3$ from $H_{t}$ then we let $H_{t+1}$ be the graph $H_t\cup P_t$ where $P_t$ is a path of length $3$ from $H_t$ to $v_t$. If there is no such vertex $v_t$, then we put $H=H_t$ and finish the process.

It only needs to be shown that $H$ produced in this way is not too big. To see this, observe that, on the one hand $H_t$, if it exists, contains exactly $1+3t$ vertices. On the other hand, consider the number of vertices $a_t$ in $G$ which are at distance at most $1$ from $H_t$. Clearly $a_0\geq k+1$. Moreover, if $H_{t+1}$ exists, then $a_{t+1} \geq a_t + k+1$ because $v_t$ and all of its neighbors are counted in $a_{t+1}$ but not in $a_t$. Since $a_t$ can never exceed $n$, it follows that the process terminates in a step $t\leq \lfloor n/(k+1) \rfloor -1$. Thus $H$ has at most $3\lfloor n/(k+1)\rfloor - 2$ vertices.
\end{proof}
\begin{proof}[Proof of Theorem \ref{thm:mindeg}.]
Let $H$ be as in Lemma \ref{lem:2hopdom}. Then $b(G)\leq b(H)+2$. By the result of Norin and Turcotte \cite{NT22}, the burning number of $H$ is at most $(1+o(1))\sqrt{3n/(k+1)}$, and the claimed result follows.
\end{proof}

\begin{proof}[Proof of Theorem \ref{thm:weakdeg}.]
Let $\varepsilon>0$, and let $G$ be a graph on $n$ vertices with at least $\varepsilon n$ vertices with degrees not equal to $2$. Below we will always assume $n$ is large in terms of $\varepsilon$. We will prove the theorem by showing that the connected domination number of $G$ is at most $(1-c\cdot \varepsilon)n + O(1)$ for some fixed $c>0$.

If $G$ has at least $\frac{\varepsilon}3 n$ vertices of degree $1$, then the statement is immediately clear as the set of all vertices with degree at least $2$ forms a connected dominating set of size at most $(1-\varepsilon/3)n$. Otherwise, $G$ has at most $\frac{\varepsilon}3 n$ vertices of degree $1$ and at least $\frac{2\varepsilon}3 n$ vertices of degree at least $3$. If so, we consider the following steps.

\begin{claim} Let $G'$ be the graph formed from $G$ by iteratively removing vertices of degree $1$ and replacing vertices of degree $2$ by an edge until the remaining graph has minimum degree $3$. Then $G'$ has at least $\frac{\varepsilon}3 n$ vertices.
\end{claim}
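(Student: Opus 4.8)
The plan is to track a single integer potential through the reduction and show it never decreases. For any (multi)graph $F$, define
\[
\phi(F) = \#\{v : \deg_F(v)\geq 3\} - \#\{v : \deg_F(v)=1\}.
\]
By hypothesis $G$ has at least $\frac{2\varepsilon}{3}n$ vertices of degree at least $3$ and at most $\frac{\varepsilon}{3}n$ vertices of degree $1$, so $\phi(G)\geq \frac{2\varepsilon}{3}n-\frac{\varepsilon}{3}n=\frac{\varepsilon}{3}n$. On the other hand, when the process halts the remaining graph $G'$ has minimum degree $3$ and hence no vertex of degree $1$, so $\phi(G')=|V(G')|$. Thus it suffices to show that neither reduction move decreases $\phi$, since then $|V(G')|=\phi(G')\geq\phi(G)\geq\frac{\varepsilon}{3}n$.

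For the suppression move I would work in the setting of multigraphs (as is natural for such topological reductions), so that suppressing a degree-$2$ vertex genuinely replaces it by an edge between its two neighbours even when they were already adjacent. Replacing a degree-$2$ vertex $v$ with neighbours $a,b$ by the edge $ab$ then leaves $\deg(a)$ and $\deg(b)$ unchanged — each loses the edge to $v$ but gains the edge $ab$, and the corner case $a=b$ contributes a loop counting twice, again with net change zero — while removing only a vertex of degree $2$. Hence no degree class relevant to $\phi$ is touched, and $\phi$ is unchanged.

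The heart of the argument is the deletion move, which I would settle by a short case analysis on the affected neighbour. Deleting a vertex $u$ of degree $1$ removes one degree-$1$ vertex (raising $\phi$ by $1$) and lowers the degree of its unique neighbour $w$ by exactly $1$. The effect on $w$ is: if $\deg(w)\geq 4$ the count $\#\{\deg\geq3\}$ is unchanged; if $\deg(w)=3$ it drops to $2$, lowering $\phi$ by $1$; if $\deg(w)=2$ it becomes a new degree-$1$ vertex, again lowering $\phi$ by $1$; and if $\deg(w)=1$ (so $uw$ was an isolated edge) it becomes isolated, raising $\phi$ by $1$. In every case the $+1$ from removing $u$ offsets the neighbour's contribution, so the net change to $\phi$ is nonnegative.

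The one genuine obstacle I anticipate is bookkeeping around degree-$0$ vertices: the stated process only deletes vertices of degree $1$, yet the deletion move can create isolated vertices, which must be discarded for the final graph to have minimum degree $3$. I would resolve this by allowing the process also to discard isolated vertices; since such a vertex lies in neither degree class appearing in $\phi$, discarding it leaves $\phi$ unchanged and preserves monotonicity. Combining the three (non-)changes and comparing the values of $\phi$ at the two endpoints of the process then yields the claimed bound $|V(G')|\geq\frac{\varepsilon}{3}n$.
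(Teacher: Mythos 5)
Your proof is correct and follows essentially the same route as the paper: the paper defines the same potential (the ``value'' $\#\{\deg\geq 3\}-\#\{\deg =1\}$), notes $\phi(G)\geq \varepsilon n/3$ and $\phi(G')=|V(G')|$, and asserts monotonicity under the two reduction moves. You simply carry out the case analysis (and the isolated-vertex and multigraph corner cases) that the paper dismisses as ``easy to see.''
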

\begin{proof} Let us define the value of a graph as the number of vertices of degree at least $3$ minus the number of vertices of degree $1$. By assumptions above, $G$ has value at least $\frac{\varepsilon}3 n$, and it is easy to see that the contractions in the claim does not decrease the value of the graph.
\end{proof}

As $G'$ has minimum degree at least $3$, we have that $\gamma_c(G') \leq \frac34 |V(G')| + O(1) \leq  |V(G')|-\varepsilon n/12 + O(1).$


\begin{claim} Let $F$ be any connected graph. Let $F'$ be a graph formed from $F$ by removing a vertex of degree $1$ or replacing a vertex of degree $2$ by an edge. Then $\gamma_c(F) \leq \gamma_c (F')+1$.
\end{claim}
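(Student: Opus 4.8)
The plan is to relate the connected domination numbers of $F$ and $F'$ directly, by showing that any connected dominating set of $F'$ can be extended to a connected dominating set of $F$ by adding at most one vertex. I would handle the two operations separately, since the degree-$1$ deletion and the degree-$2$ suppression behave differently.

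First I would treat the case where $F'$ is obtained by removing a vertex $u$ of degree $1$, whose unique neighbor call $w$. Given a connected dominating set $D'$ of $F'$, I claim $D = D' \cup \{w\}$ is a connected dominating set of $F$. The point is that $w \in V(F')$, so $D$ dominates everything $D'$ dominated, and adding $w$ dominates $u$; moreover, since $w$ is adjacent to some vertex of $D'$ (as $D'$ was connected and dominating in $F'$, assuming $|V(F')| \geq 2$, or trivially otherwise), the set $D$ remains connected. Hence $\gamma_c(F) \leq |D| \leq \gamma_c(F') + 1$, and one should note that the edge cases of very small $F'$ only make the bound easier.

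Next I would treat the suppression of a degree-$2$ vertex. Here $F$ has a vertex $v$ with exactly two neighbors $x, y$, and $F'$ replaces the path $x\text{-}v\text{-}y$ by the edge $xy$ (so $V(F') = V(F) \setminus \{v\}$ and the edge $xy$ may already have been present). Given a connected dominating set $D'$ of $F'$, the natural candidate is $D = D' \cup \{v\}$. The subtlety is that the edge $xy$ used to witness connectivity of $D'$ in $F'$ does not exist in $F$; but in $F$ the vertices $x$ and $y$ are connected through $v$, so inserting $v$ into $D$ repairs exactly this gap. I would argue that $D$ is connected in $F$ by replacing any use of the edge $xy$ in the connected subgraph on $D'$ with the path $x\text{-}v\text{-}y$, and that $D$ dominates $F$ since it dominates everything outside $\{v\}$ that $D'$ dominated, and $v \in D$ dominates itself.

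The main obstacle I anticipate is the connectivity bookkeeping in the degree-$2$ case, specifically making sure $D \cup \{v\}$ is connected regardless of whether $x$ or $y$ already lie in $D'$. If at least one of $x, y$ is in $D'$, then $v$ attaches to $D'$ directly and connectivity is immediate; if neither is in $D'$, then since $D'$ is dominating in $F'$ both $x$ and $y$ have neighbors in $D'$, and I must verify that adding $v$ (adjacent to both $x$ and $y$ but to no vertex of $D'$) still yields a connected induced subgraph on $D$—this requires checking that $D'$ remained connected even after the edge $xy$ is deleted, which may fail and force the inclusion of $x$ or $y$ rather than $v$. I would resolve this by choosing the added vertex adaptively (adding $v$, or alternatively $x$ or $y$) to guarantee connectivity while never adding more than one vertex, which is exactly what the "$+1$" in the bound allows.
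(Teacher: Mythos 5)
Your proposal is correct and follows essentially the same strategy as the paper: take a minimum connected dominating set of $F'$ and adaptively add one vertex, either $v$ itself (when a neighbor of $v$ is already in the set, with the path $x$--$v$--$y$ repairing any use of the edge $xy$) or a neighbor of $v$ (otherwise). Your worry in the final subcase is actually vacuous --- when neither $x$ nor $y$ lies in $D'$, the edge $xy$ does not appear in the induced subgraph on $D'$, so $D'$ is automatically connected in $F$ --- but your adaptive fallback of adding $x$ or $y$ handles it correctly in any case.
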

\begin{proof} Let $v$ be the vertex of $F$ removed to form $F'$, and let $U'$ be a minimum connected dominating set of $F'$. If $U'$ contains a neighbor of $v$, then we can form a connected dominating set of $F$ by taking $U=U'\cup \{v\}$. If no neighbors of $v$ are in $U'$ we instead take an arbitrary neighbor $w$ of $v$ and put $U=U'\cup \{w\}$.
\end{proof}

Combining these statements, it follows that $\gamma_c(G) \leq (1-\varepsilon/12) n + O(1)$, as desired. By the same argument as for Theorem \ref{thm:mindeg}, it follows that $b(G)\leq (1+o(1))\sqrt{(1-\varepsilon/12)n}$, which completes the proof.
\end{proof}

\section*{Acknowledgements} I thank Raphael Steiner for his feedback on simplifying the statement and proof of Lemma \ref{lem:2hopdom}.

\bibliographystyle{abbrv}
\bibliography{references}
\end{document}